\newtheorem{theorem}{Theorem}[section]
\newtheorem{lemma}[theorem]{Lemma}
\newtheorem{proposition}[theorem]{Proposition}
\newtheorem{question}[theorem]{Question}
\theoremstyle{remark}
    \newtheorem*{remark}{{\bf Remark}}
\theoremstyle{definition}
\numberwithin{equation}{section}
\def\Z {{\mathbb Z}}
\def\ZZ {{\mathcal Z}}
\def\B {{\mathcal B}}
\def\S {{{\mathcal S}}}
\def\A {{\mathcal A}}
\def\red#1 {\textcolor{red}{#1 }}
\def\blue#1 {\textcolor{blue}{#1 }}
\numberwithin{equation}{section}
\def\Z {{\mathbb Z}}
\begin{document}

\title[Sum of Two Squares Modulo $n$]{Representing Integers as the Sum of Two Squares in the Ring $\Z_n$}

\author{Joshua Harrington}
\address{Department of Mathematics, Shippensburg University, Pennsylvania, USA}
\email[Joshua~Harrington]{JSHarrington@ship.edu}

\author{Lenny Jones}
\address{Department of Mathematics, Shippensburg University, Pennsylvania, USA}
\email[Lenny~Jones]{lkjone@ship.edu}

\author{Alicia Lamarche}
\address{Department of Mathematics, Shippensburg University, Pennsylvania, USA}
\email[Alicia~Lamarche]{al5903@ship.edu}

\date{\today}

\begin{abstract}
A classical theorem in number theory due to Euler states that a positive integer $z$ can be written as the sum of two squares if and only if all prime factors $q$ of $z$, with $q\equiv 3 \pmod{4}$, have even exponent in the prime factorization of $z$. One can consider a minor variation of this theorem by not allowing the use of zero as a summand in the representation of $z$ as the sum of two squares. Viewing each of these questions in $\Z_n$, the ring of integers modulo $n$, we give a characterization of all integers $n\ge 2$ such that every $z\in \Z_n$ can be written as the sum of two squares in $\Z_n$.
\end{abstract}
\maketitle
\section{Introduction}\label{Section:Intro}
We begin with a classical theorem in number theory due to Euler \cite{S}.
\begin{theorem}\label{Thm:Sumof2squares}
 A positive integer $z$ can be written as the sum of two squares if and only if all prime factors $q$ of $z$ with $q\equiv 3 \pmod{4}$ have even exponent in the prime factorization of $z$.
\end{theorem}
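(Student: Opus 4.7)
The plan is to prove both directions using the multiplicative structure of integers representable as a sum of two squares. The starting point is the Brahmagupta--Fibonacci identity
\[
(a^2+b^2)(c^2+d^2) = (ac-bd)^2 + (ad+bc)^2,
\]
verified by direct expansion, which shows that the set of sums of two squares is closed under multiplication.

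For the sufficiency direction, I would write $z = 2^{a_0} \prod_i p_i^{a_i} \prod_j q_j^{2b_j}$, where the $p_i \equiv 1 \pmod{4}$ and the $q_j \equiv 3 \pmod{4}$. Observe that $2 = 1^2 + 1^2$ and $q_j^{2b_j} = (q_j^{b_j})^2 + 0^2$. Invoking Fermat's two-squares theorem to represent each $p_i = x_i^2 + y_i^2$, the identity above assembles these factors (with repeated application) into a representation of $z$ as a sum of two squares.

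For the necessity direction, the crucial lemma is: if $q \equiv 3 \pmod{4}$ is prime and $q \mid a^2+b^2$, then $q \mid a$ and $q \mid b$. Indeed, if $q \nmid b$, then $(ab^{-1})^2 \equiv -1 \pmod{q}$, making $-1$ a quadratic residue mod $q$, which by Euler's criterion forces $q \equiv 1 \pmod{4}$, a contradiction. Now suppose $z = a^2+b^2$ and some $q \equiv 3 \pmod{4}$ divides $z$. Applying the lemma gives $q \mid a$ and $q \mid b$, so $z/q^2 = (a/q)^2 + (b/q)^2$ is again a sum of two squares. Iterating this descent forces the exponent of $q$ in $z$ to be even.

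The main obstacle is Fermat's two-squares theorem for primes $p \equiv 1 \pmod{4}$. The cleanest route is via Thue's lemma: since $p \equiv 1 \pmod{4}$, Euler's criterion yields $y$ with $y^2 \equiv -1 \pmod{p}$. The $(\lfloor\sqrt{p}\rfloor+1)^2 > p$ pairs $(u,v)$ with $0 \le u,v \le \lfloor\sqrt{p}\rfloor$ yield, by pigeonhole, two distinct pairs congruent modulo $p$ under $u - yv$, producing nonzero $a,b$ with $|a|,|b| \le \sqrt{p}$ and $a \equiv yb \pmod{p}$. Then $a^2+b^2 \equiv (y^2+1)b^2 \equiv 0 \pmod{p}$ with $0 < a^2+b^2 < 2p$, forcing $a^2+b^2 = p$. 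This is the one ingredient that requires genuine input beyond algebraic identities and Euler's criterion.
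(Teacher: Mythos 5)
Your proposal is correct and complete in outline. Note, however, that the paper does not actually prove Theorem \ref{Thm:Sumof2squares}: it cites the result as classical, attributing the first complete proof to Euler (in a 1749 letter to Goldbach) via the method of descent, and only records the multiplicativity identity separately as Lemma \ref{Lem:Closed}. Your argument therefore supplies a proof where the paper gives none, and it takes the standard modern route rather than the historical one the paper alludes to: for sufficiency you use exactly the Brahmagupta--Fibonacci identity of Lemma \ref{Lem:Closed} to assemble $2=1^2+1^2$, the factors $q_j^{2b_j}=(q_j^{b_j})^2+0^2$, and the primes $p_i\equiv 1\pmod 4$; for necessity you use the clean lemma that $q\equiv 3\pmod 4$ and $q\mid a^2+b^2$ force $q\mid a$ and $q\mid b$ (via Euler's criterion for $\left(\frac{-1}{q}\right)$), followed by division by $q^2$; and for the one genuinely hard ingredient, Fermat's two-squares theorem, you use Thue's pigeonhole lemma rather than Euler's descent. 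The Thue argument is airtight as you state it (the small point that $a$ and $b$ are individually nonzero follows since $a\equiv yb\pmod p$ with $|a|,|b|<\sqrt p$ forces either both or neither to vanish), and it is arguably more elementary and self-contained than descent, at the cost of being nonconstructive in a different way. All steps check out; this would serve as a legitimate self-contained proof of the theorem the paper takes as its starting point.
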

Euler's complete proof of Theorem \ref{Thm:Sumof2squares} first appeared in a letter to Goldbach  \cite{S}, dated April 12, 1749. His proof uses a technique known as the \emph{method of descent} \cite{HW}, which was first used by Fermat to show the nonexistence of nontrivial solutions to certain Diophantine equations.
Note that, according to Theorem \ref{Thm:Sumof2squares}, the positive integer 9, for example, can be written as the sum of two squares. Since there is only way to write 9 as the sum of two squares, namely $9=3^2+0^2$, we conclude that $0^2$ is allowed as a summand in the representation as the sum of two squares for the integers described in Theorem \ref{Thm:Sumof2squares}. So, a somewhat natural question to ask is the following.
 \begin{question}\label{Q1}
  What positive integers $z$ can be written as the sum of two nonzero squares?
 \end{question}
 The authors could not find a reference for the answer to Question \ref{Q1} in the literature, and although it is not our main concern in this article, we nevertheless provide an answer for the sake of completeness. The following well-known result is due originally to Diophantus \cite{HW}.
 \begin{lemma}\label{Lem:Closed}
   The set of positive integers that can be written as the sum of two squares is closed under multiplication.
 \end{lemma}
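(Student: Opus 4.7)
The plan is to prove the lemma directly via the Brahmagupta--Fibonacci identity, which expresses the product of two sums of two squares as a sum of two squares in a completely explicit way. Given positive integers $m$ and $n$ that are sums of two squares, I would fix representations $m = a^2 + b^2$ and $n = c^2 + d^2$ with $a, b, c, d \in \Z_{\geq 0}$, and then simply exhibit the identity
\begin{equation*}
(a^2 + b^2)(c^2 + d^2) = (ac - bd)^2 + (ad + bc)^2,
\end{equation*}
which shows that $mn$ is itself a sum of two squares. The proof of the identity is a one-line expansion of both sides.

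If I wanted to motivate the identity rather than pull it out of thin air, I would give the Gaussian integer viewpoint. In $\Z[i]$, the norm $N(x + yi) = x^2 + y^2$ is multiplicative, because $N(\alpha) = \alpha\bar\alpha$ and complex conjugation is a ring homomorphism. Taking $\alpha = a + bi$ and $\beta = c + di$, one has
\begin{equation*}
(a^2 + b^2)(c^2+d^2) = N(\alpha)N(\beta) = N(\alpha\beta) = N\bigl((ac - bd) + (ad+bc)i\bigr),
\end{equation*}
which both explains and verifies the identity above. Either presentation suffices, and the Gaussian integer derivation has the advantage of fitting naturally with Theorem \ref{Thm:Sumof2squares}, since the two-squares theorem is most cleanly proved by analyzing primes in $\Z[i]$.

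There is no real obstacle here: the entire content of the lemma is the algebraic identity, which requires only expanding two binomials and collecting cross terms (the $\pm 2abcd$ terms cancel). The only thing to be slightly careful about is that ``sum of two squares'' in the statement of the lemma allows the summands to be zero, so the representation produced by the identity is automatically admissible without any side conditions. This will matter later when answering Question \ref{Q1}, where an analogous closure argument will need to be examined more carefully if zero summands are forbidden.
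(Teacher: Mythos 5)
Your proposal is correct and uses exactly the same Brahmagupta--Fibonacci identity $(a^2+b^2)(c^2+d^2)=(ac-bd)^2+(ad+bc)^2$ as the paper's one-line proof; the Gaussian integer discussion is pleasant motivation but adds nothing essential. No issues.
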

 \begin{proof}
 Let $z_1$ and $z_2$ be positive integers such that $z_1=a^2+b^2$ and $z_2=c^2+d^2$. Then
 \[z_1z_2=(ac-bd)^2+(ad+bc)^2. \qedhere\]
 \end{proof}
 Lemma \ref{Lem:Closed} allows us to establish the following partial answer to Question \ref{Q1}.
 \begin{proposition}\label{Prop:EulerExt}
   Let $p\equiv 1 \pmod{4}$ be a prime, and let $a$ be a positive integer. Then there exist nonzero squares $x^2$ and $y^2$ such that $p^a=x^2+y^2$.
 \end{proposition}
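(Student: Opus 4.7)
The plan is to proceed by induction on $a$, using Fermat's two-square theorem as the base case and Lemma \ref{Lem:Closed} (the Brahmagupta--Fibonacci / Diophantus identity) as the inductive mechanism. The delicate point is to make sure that applying the identity never introduces a zero summand; this is where the hypothesis $p \equiv 1 \pmod 4$ (as opposed to $p = 2$) will be used in a crucial way.

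For the base case $a = 1$, Fermat's theorem on sums of two squares gives $p = s^2 + t^2$ for some integers $s,t$, and both $s$ and $t$ must be nonzero since a prime is not a perfect square. Now assume inductively that $p^{a-1} = u^2 + v^2$ with $u,v \ge 1$. Multiplying and expanding by Lemma \ref{Lem:Closed} in its two standard forms yields
\[
p^a = (su - tv)^2 + (sv + tu)^2 = (su + tv)^2 + (sv - tu)^2.
\]
In the first representation the summand $sv + tu$ is automatically positive, and in the second the summand $su + tv$ is automatically positive. So the question reduces to showing that $su - tv$ and $sv - tu$ cannot both vanish simultaneously.

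Suppose for contradiction that $su = tv$ and $sv = tu$. Multiplying gives $s^2 uv = t^2 uv$, and since $u,v \ge 1$ we get $s^2 = t^2$, i.e.\ $s = t$ (taking both positive). But then $p = s^2 + t^2 = 2s^2$, which forces $p = 2$, contradicting $p \equiv 1 \pmod 4$. Hence at least one of the two representations above exhibits $p^a$ as the sum of two nonzero squares, completing the inductive step.

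The main (and really only) obstacle is the short parity argument just sketched; the rest is a direct application of the identity from Lemma \ref{Lem:Closed}. I would present the proof in exactly this order: state the base case from Fermat, set up the two forms of the Brahmagupta--Fibonacci identity, and then dispatch the simultaneous-vanishing case by observing that it would force $p = 2$.
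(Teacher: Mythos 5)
Your proof is correct, but it takes a genuinely different route from the paper's. You run an induction on $a$, writing $p^a=(s^2+t^2)(u^2+v^2)$ and exploiting the fact that the Diophantus identity of Lemma \ref{Lem:Closed} comes in two forms, $(su-tv)^2+(sv+tu)^2$ and $(su+tv)^2+(sv-tu)^2$; you then show the two ``minus'' summands cannot vanish simultaneously, since $su=tv$ and $sv=tu$ would force $s=t$ and hence $p=2s^2$, contradicting $p\equiv 1\pmod 4$. The paper instead avoids induction entirely by splitting on the parity of $a$: when $a$ is odd, $p^a$ is not a perfect square, so any representation as a sum of two squares (which exists by iterating Lemma \ref{Lem:Closed}) automatically has both summands nonzero; when $a$ is even, it uses the explicit identity $p^2=(d^2-c^2)^2+(2cd)^2$ with $p=c^2+d^2$, $d>c>0$, and scales by $p^{(a-2)/2}$. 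Your argument is more uniform and makes the nonvanishing issue fully explicit at every step, at the cost of the small simultaneous-vanishing lemma; the paper's argument is shorter because the odd case gets nonvanishing for free from the non-squareness of $p^a$, and the even case is handled by one explicit formula. Both are complete and rely on the same two ingredients (Fermat's two-square theorem for the base representation and the multiplicativity identity), so either would serve.
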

 \begin{proof}
 Using Theorem \ref{Thm:Sumof2squares} and the fact that $p$ is prime, we have that $p=c^2+d^2$ for some integers $d>c>0$. If $a$ is odd, then $p^a$ is not a square, and the theorem follows from Lemma \ref{Lem:Closed}. If $a$ is even, then we can write $p^a=p^2p^{a-2}$ with $a-2$ even. Since
  \[p^2=\left(d^2-c^2\right)^2+\left(2cd\right)^2,\]
  we have that
  \[p^a=\left(p^{(a-2)/2}\left(d^2-c^2\right)\right)^2+\left(p^{(a-2)/2}\left(2cd\right)\right)^2,\] with neither summand equal to zero.
 \end{proof}

 To provide a complete answer to Question \ref{Q1}, we let $\ZZ$ denote the set of all integers described in Theorem \ref{Thm:Sumof2squares}, and we ask the following, somewhat convoluted, question.
  \begin{question} \label{Q2}
 Which integers $z\in \ZZ$ actually do require the use of zero when written as the sum of two squares?
  \end{question} Certainly, the integers $z$ that answer Question \ref{Q2} are squares themselves, and therefore we have that $z=c^2$, for some positive integer $c$, and no integers $a>0$ and $b>0$ exist with $z=c^2=a^2+b^2$. In other words, $\sqrt{z}$ is not the third entry in a Pythagorean triple $(a,b,c)$.
 Pythagorean triples $(a,b,c)$ can be described precisely in the following way.
 \begin{theorem}\label{Thm:Py}
 The triple $(a,b,c)$ is a Pythagorean triple if and only if there exist integers $k>0$ and $u>v>0$ of opposite parity with $\gcd(u,v)=1$, such that
 \[a=(u^2-v^2)k, \quad b=(2uv)k \quad \mbox{and} \quad c=(u^2+v^2)k.\]
 \end{theorem}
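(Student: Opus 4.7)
The plan is to prove the characterization in both directions, with one direction being a routine algebraic verification and the other being the substantive content.

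For the reverse direction, if $a=(u^2-v^2)k$, $b=(2uv)k$, and $c=(u^2+v^2)k$, I would simply expand
\[a^2+b^2=\bigl((u^2-v^2)^2+(2uv)^2\bigr)k^2=(u^2+v^2)^2k^2=c^2,\]
confirming the triple is Pythagorean.

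For the forward direction, given a Pythagorean triple $(a,b,c)$, I would first reduce to the primitive case by setting $k=\gcd(a,b)$ and writing $a=ka_0$, $b=kb_0$. Since $k^2\mid a^2+b^2=c^2$ forces $k\mid c$, I can also write $c=kc_0$, yielding a primitive Pythagorean triple $(a_0,b_0,c_0)$ with $\gcd(a_0,b_0)=1$. Next I would settle parity: exactly one of $a_0,b_0$ is even, since both being even contradicts primitivity and both being odd forces $a_0^2+b_0^2\equiv 2\pmod 4$, which is impossible for a square. Renaming if necessary, assume $b_0$ is even and $a_0,c_0$ are odd.

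The key algebraic step is to factor
\[b_0^2=c_0^2-a_0^2=(c_0-a_0)(c_0+a_0),\]
then write $c_0-a_0=2s$ and $c_0+a_0=2t$ (both factors are even since $a_0,c_0$ are odd), giving $(b_0/2)^2=st$. Any prime dividing both $s$ and $t$ would divide $t+s=c_0$ and $t-s=a_0$, contradicting primitivity, so $\gcd(s,t)=1$. Because coprime positive integers whose product is a perfect square must each individually be a perfect square, I can write $s=v^2$ and $t=u^2$ with $u>v>0$. Reading off coordinates then gives $a_0=u^2-v^2$, $b_0=2uv$, and $c_0=u^2+v^2$. The condition $\gcd(u,v)=1$ follows from $\gcd(s,t)=1$, and the opposite parity of $u,v$ follows from the fact that $a_0=u^2-v^2$ is odd.

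The main obstacle is the step asserting that coprime positive factors of a perfect square are themselves perfect squares; this is a standard but nontrivial application of unique factorization in $\Z$, and it is really where all the arithmetic content of the theorem is concentrated. The only other subtlety is bookkeeping the WLOG choice of which of $a,b$ plays the role of $2uv$, which matches the asymmetric formulation in the theorem statement.
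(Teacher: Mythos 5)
Your argument is correct and complete: it is the standard Euclid-style derivation (reduce to a primitive triple, fix the parities, factor $b_0^2=(c_0-a_0)(c_0+a_0)$, and invoke the fact that coprime factors of a square are squares). The paper itself states Theorem \ref{Thm:Py} as a classical fact and supplies no proof at all, so there is nothing to compare against; the only caveat, which you already flag, is that the theorem as literally written forces $b$ to be the even leg, so the forward direction holds only up to interchanging $a$ and $b$.
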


 Thus, we have the following.

 \begin{theorem}\label{Thm:Nonzero2}
 Let $\widehat{\ZZ}$ be the set of positive integers that can be written as the sum of two nonzero squares. Then, $z\in \widehat{\ZZ}$
if and only if $z\in \ZZ$, and if $z$ is a perfect square, then $\sqrt{z}=(u^2+v^2)k$ for some integers $k>0$ and $u>v>0$ of opposite parity with $\gcd(u,v)=1$.
 \end{theorem}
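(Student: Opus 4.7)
The plan is to prove the biconditional in both directions. The forward direction $(\Rightarrow)$ is nearly immediate: if $z=a^2+b^2$ with $a,b>0$ then $z\in \ZZ$ by definition, and if in addition $z=c^2$ is a perfect square, then $(a,b,c)$ is a Pythagorean triple with positive entries, so Theorem \ref{Thm:Py} hands us the required parametrization of $c=\sqrt{z}$.

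For the converse $(\Leftarrow)$, I would handle the perfect-square sub-case first, since it follows directly from Theorem \ref{Thm:Py}: taking $a=(u^2-v^2)k$ and $b=2uvk$, the conditions $u>v>0$ and $k>0$ force $a,b>0$, and $a^2+b^2=((u^2+v^2)k)^2=z$. The substantive case is when $z\in \ZZ$ is not a perfect square. Here I would factor $z=2^{a_0}\prod p_i^{a_i}\prod q_j^{2b_j}$, with $p_i\equiv 1\pmod 4$ and $q_j\equiv 3\pmod 4$ (the $q_j$-exponents being even by Theorem \ref{Thm:Sumof2squares}); since $z$ is not a perfect square, either some $a_i$ is odd, or else $a_0$ is odd while every $a_i$ is even. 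In this latter case $z=2m^2=m^2+m^2$ and we are done. In the former case, I would apply Proposition \ref{Prop:EulerExt} to write $p_i^{a_i}=x^2+y^2$ with $x,y>0$, use $z/p_i^{a_i}\in \ZZ$ to write $z/p_i^{a_i}=c^2+d^2$, and invoke Lemma \ref{Lem:Closed} to obtain $z=(xc-yd)^2+(xd+yc)^2$.

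The hard part will be ensuring both summands in this last identity are nonzero. The summand $xd+yc$ is automatically positive, since $x,y>0$ and at least one of $c,d$ is strictly positive. The potential issue is $xc-yd=0$, and my remedy will be to pass to the alternative Brahmagupta identity $z=(xc+yd)^2+(xd-yc)^2$: the equality $xc=yd$ forces both $c$ and $d$ to be positive (otherwise both vanish, contradicting $z/p_i^{a_i}>0$), so $xc+yd=2xc>0$; and if additionally $xd-yc=0$, then multiplying $xc=yd$ by $xd=yc$ yields $x^2cd=y^2cd$, hence $x=y$, making $p_i^{a_i}=2x^2$ even, which contradicts $p_i\equiv 1\pmod 4$. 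Thus at least one of the two Brahmagupta identities always exhibits $z$ as a sum of two nonzero squares, completing the argument.
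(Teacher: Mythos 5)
Your proof is correct, and the half of it that carries the real content of the theorem --- the equivalence for perfect squares --- runs exactly as the paper intends: the paper deduces Theorem \ref{Thm:Nonzero2} directly from Theorem \ref{Thm:Py}, by observing that a square $z=c^2$ lies in $\widehat{\ZZ}$ precisely when $c$ is the hypotenuse of some Pythagorean triple, and your forward direction and your perfect-square sub-case of the converse are just that observation spelled out. Where you diverge is the non-perfect-square case of the converse. There the paper needs nothing beyond the remark that if $z\in\ZZ$ then $z=a^2+b^2$ by Theorem \ref{Thm:Sumof2squares}, and if either summand were zero then $z$ would itself be a perfect square; so for non-squares every Euler representation automatically has both summands nonzero. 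Your route --- factoring $z$, splitting off $p^{a}$ with $p\equiv 1\pmod{4}$ via Proposition \ref{Prop:EulerExt} (or falling back on $z=2m^2=m^2+m^2$), and then checking that at least one of the two Brahmagupta identities from Lemma \ref{Lem:Closed} has both components nonzero --- is a valid constructive argument; in particular your verification that $xc=yd$ and $xd=yc$ together would force $p^{a}=2x^2$, which is impossible, is sound. But it is a page of machinery replacing a one-line observation. That machinery is essentially what the paper does need later, in Proposition \ref{Prop:EulerExt} and the second half of Theorem \ref{Thm:Nonzero3}, where one genuinely must manufacture nonzero summands for integers that \emph{are} perfect squares; for the present statement the non-square case comes for free.
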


However, a closer look reveals a somewhat more satisfying description for the integers $z\in \widehat{\ZZ}$ in Theorem \ref{Thm:Nonzero2}.
\begin{theorem}\label{Thm:Nonzero3}
  Let $\widehat{\ZZ}$ be the set of positive integers that can be written as the sum of two nonzero squares. Then, $z\in \widehat{\ZZ}$
if and only if all prime factors $p$ of $z$ with $q\equiv 3 \pmod{4}$ have even exponent in the prime factorization of $z$, and if $z$ is a perfect square, then $z$ must be divisible by some prime $p\equiv 1 \pmod{4}$.
\end{theorem}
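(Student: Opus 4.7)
The plan is to build on Theorem~\ref{Thm:Nonzero2} and translate its arithmetic condition on $\sqrt{z}$ into the cleaner divisibility condition stated here. My first observation is that any perfect square trivially lies in $\ZZ$, since every prime factor (including those $\equiv 3 \pmod{4}$) appears to an even exponent, so the only real content of the theorem is to establish, for a perfect square $z = c^2$, the equivalence of the condition $c = (u^2+v^2)k$ from Theorem~\ref{Thm:Nonzero2} with the condition that $z$ be divisible by some prime $p \equiv 1 \pmod{4}$.

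For the forward direction, I would show that any integer of the form $u^2+v^2$ with $u > v > 0$ coprime and of opposite parity is odd, is at least $5$, and has only prime factors congruent to $1$ modulo $4$. The last claim follows from the standard argument: if $q \equiv 3 \pmod{4}$ divided $u^2+v^2$, then $u^2 \equiv -v^2 \pmod{q}$, and since $-1$ is a quadratic nonresidue modulo such $q$, this would force $q$ to divide both $u$ and $v$, contradicting coprimality. Hence $u^2+v^2$ has a prime factor $\equiv 1 \pmod{4}$, which divides $c$ and therefore $z$.

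For the reverse direction, I would invoke Theorem~\ref{Thm:Sumof2squares} to write a prime divisor $p \equiv 1 \pmod{4}$ of $z$ as $p = u^2 + v^2$ with $u > v > 0$; coprimality of $u$ and $v$ follows from the primality of $p$, and opposite parity follows from the oddness of $p$. Setting $k = c/p$ then yields the desired factorization $c = (u^2+v^2)k$, which plugs into Theorem~\ref{Thm:Nonzero2} to conclude that $z \in \widehat{\ZZ}$.

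I do not anticipate a serious obstacle: the argument is essentially a translation between two equivalent forms of the same criterion, resting only on Fermat's characterization of primes expressible as sums of two squares and on a standard quadratic-residue observation. The only mild subtlety is confirming that the triple $(u,v,k)$ extracted from a prime divisor of $z$ satisfies all the auxiliary constraints (positivity, coprimality, opposite parity) required by Theorem~\ref{Thm:Nonzero2}, which is immediate once one writes it down.
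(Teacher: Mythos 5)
Your proposal is correct, but the forward direction takes a genuinely different route from the paper. Where you directly prove that an integer $u^2+v^2$ with $u>v>0$, $\gcd(u,v)=1$ and opposite parity is odd, at least $5$, and free of prime factors $q\equiv 3\pmod{4}$ (via the standard observation that $u^2\equiv -v^2\pmod{q}$ would make $-1$ a quadratic residue modulo $q$ unless $q$ divides both $u$ and $v$), the paper instead runs a Fermat-style descent: assuming no prime $p\equiv 1\pmod 4$ divides $z$, it deduces from Theorem \ref{Thm:Sumof2squares} that $u^2+v^2$ must itself be a perfect square lying in $\widehat{\ZZ}$, re-applies Theorem \ref{Thm:Nonzero2} to get a new pair $(u_1,v_1)$, and repeats until a contradiction is reached. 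Your argument is more direct and avoids the descent's somewhat informal ``repeat this process, eventually we reach a contradiction'' step, at the cost of introducing the quadratic-residue lemma (which the paper only deploys later, in Section \ref{Section:NoZero}). In the reverse direction you also diverge mildly: you extract $(u,v)$ from a single prime $p\equiv 1\pmod 4$ dividing $c=\sqrt{z}$ and feed the factorization $c=(u^2+v^2)(c/p)$ back into Theorem \ref{Thm:Nonzero2}, whereas the paper bypasses Theorem \ref{Thm:Nonzero2} entirely by writing $z=p^{2e}\prod_i r_i^{2e_i}$, representing $p^{2e}$ as a sum of two nonzero squares via Proposition \ref{Prop:EulerExt}, and scaling both summands by $\prod_i r_i^{e_i}$. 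Both versions are sound; just make sure, as you note, to verify $p\mid c$ (immediate since $p$ is prime and $p\mid c^2$) and the parity and coprimality constraints on $(u,v)$, all of which follow from the primality and oddness of $p$.
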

\begin{proof}
Suppose first that $z\in\widehat{\ZZ}$. Then $z\in \ZZ$ and all prime factors $q$ of $z$ with $q\equiv 3 \pmod{4}$ have even exponent in the prime factorization of $z$ by Theorem \ref{Thm:Sumof2squares}. So, suppose that $z=c^2$ for some positive integer $c$, and assume, by way of contradiction, that $z$ is divisible by no prime $p\equiv 1 \pmod{4}$. By Theorem \ref{Thm:Nonzero2}, we can write $c=\left(u^2+v^2\right)k$ for some integers $k>0$ and $u>v>0$ of opposite parity with $\gcd(u,v)=1$. Since no prime $p\equiv 1 \pmod{4}$ divides $z$, we have that no prime $p\equiv 1 \pmod{4}$ divides $u^2+v^2$. Note that $u^2+v^2$ is odd, and so every prime $q$ dividing $u^2+v^2$ is such that $q\equiv 3 \pmod{4}$. Thus, by Theorem \ref{Thm:Sumof2squares}, every prime divisor of $u^2+v^2$ has even exponent in the prime factorization of $u^2+v^2$. In other words, $u^2+v^2$ is a perfect square. Hence, $u^2+v^2\in \widehat{\ZZ}$, and by Theorem \ref{Thm:Nonzero2}, we have that
\[\sqrt{u^2+v^2}=\left(u_1^2+v_1^2\right)k_1,\]
for some integers $k_1>0$ and $u_1>v_1>0$ of opposite parity with $\gcd(u_1,v_1)=1$. We can repeat this process, but eventually we reach an integer that is the sum of two distinct squares that has a prime factor $q\equiv 3 \pmod{4}$ that occurs to an odd power in its prime factorization. This contradicts Theorem \ref{Thm:Sumof2squares}, and completes the proof in this direction.

If $z$ is not a perfect square and every prime factor $q$ of $z$ with $q\equiv 3 \pmod{4}$ has even exponent in the prime factorization of $z$, then $z$ can be written as the sum of two squares by Theorem \ref{Thm:Sumof2squares}; and moreover, these squares must be nonzero since $z$ is not a square itself. Thus, $z\in \widehat{\ZZ}$ in this case. Now suppose that $z$ is a perfect square and $z$ is divisible by some prime $p\equiv 1 \pmod{4}$. Let $z=p^{2e}\prod_{i=1}^t\left(r_i\right)^{2e_i}$ be the canonical factorization of $z$ into distinct prime powers. By Proposition \ref{Prop:EulerExt}, there exist integers $u>v>0$, such that $p^{2e}=u^2+v^2$. Then
\[z=\left(u\prod_{i=1}^t\left(r_i\right)^{e_i}\right)^2
+\left(v\prod_{i=1}^t\left(r_i\right)^{e_i}\right)^2\in \widehat{\ZZ},\]
and the proof is complete.
\end{proof}
\begin{remark}
  The method of proof used to establish the first half of Theorem \ref{Thm:Nonzero3} is reminiscent of Fermat's method of descent \cite{HW}.
\end{remark}
In this article, we move the setting from $\Z$ to $\Z_n$, the ring of integers modulo $n$, and we investigate a modification of Question \ref{Q1} in this new realm. In particular, we discover for certain values of $n$ that every element in $\Z_n$ can be written as the sum of two nonzero squares. It is our main goal to characterize, in a precise manner, these particular values of $n$. For the sake of completeness, we also characterize those values of $n$ such that every $z\in \Z_n$ can be written as the sum of two squares where the use of zero is allowed as a summand in such a representation of $z$.

\section{Preliminaries and Notation}\label{Prelim}
To establish our results, we need some additional facts that follow easily from well-known theorems in number theory. We state these facts without proof.
The first proposition follows immediately from the Chinese remainder theorem, while the second proposition is a direct consequence of Hensel's lemma.
\begin{proposition}\label{Prop:CRT}\cite{IR}
Suppose that $m_1,m_2,\ldots ,m_t$ are integers with $m_i\ge 2$ for all $i$, and $\gcd(m_i,m_j)=1$ for all $i\ne j$. Let $c_1,c_2,\ldots,c_t$ be any integers, and let $x\equiv c \pmod{M}$ be the solution of the system of congruences $x\equiv c_i \pmod{m_i}$ using the Chinese remainder theorem. Then there exists $y$ such that $y^2\equiv c \pmod{M}$ if and only if there exist $y_1,y_2,\ldots ,y_t$ such that $y_i^2\equiv c_i \pmod{m_i}$.
\end{proposition}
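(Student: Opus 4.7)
The plan is to apply the Chinese remainder theorem componentwise, exploiting the fact that the natural isomorphism $\Z/M\Z \cong \Z/m_1\Z \times \cdots \times \Z/m_t\Z$ is a ring isomorphism, and therefore commutes with the squaring map. Since the statement is an ``if and only if,'' I would split the argument into two short implications.

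For the forward direction, I would assume $y^2 \equiv c \pmod{M}$. Because each $m_i$ divides $M$, reducing this congruence modulo $m_i$ yields $y^2 \equiv c \equiv c_i \pmod{m_i}$, so taking $y_i$ to be the residue of $y$ modulo $m_i$ supplies the required square roots. No additional hypothesis beyond $m_i \mid M$ is used here.

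For the reverse direction, I would assume integers $y_1,\ldots,y_t$ with $y_i^2 \equiv c_i \pmod{m_i}$ are given. Using the pairwise coprimality of the $m_i$, I would invoke the Chinese remainder theorem a second time to produce an integer $y$ with $y \equiv y_i \pmod{m_i}$ for every $i$. Squaring yields $y^2 \equiv y_i^2 \equiv c_i \pmod{m_i}$ for each $i$, and by the uniqueness clause of the Chinese remainder theorem the only residue class modulo $M$ whose reduction modulo each $m_i$ equals $c_i$ is $c$ itself. Hence $y^2 \equiv c \pmod{M}$.

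There is essentially no obstacle here; the proposition is a direct restatement of the fact that squaring respects the CRT decomposition. The only point requiring any care is the uniqueness step in the reverse direction, which is what ensures that the CRT lift of $(y_1^2,\ldots,y_t^2)$ is actually congruent to $c$ modulo $M$ rather than merely reducing to each $c_i$ modulo $m_i$ in isolation.
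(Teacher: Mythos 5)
Your proof is correct, and it is exactly the routine verification the paper has in mind: the paper states this proposition without proof, remarking only that it ``follows immediately from the Chinese remainder theorem,'' which is precisely your argument (reduce modulo each $m_i$ for the forward direction, lift the $y_i$ by CRT and use uniqueness modulo $M=m_1m_2\cdots m_t$ for the converse). Nothing is missing.
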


\begin{proposition}\label{Prop:HL}\cite{N}
Let $p$ be a prime, and let $z$ be an integer. If there exists $x$ such that $x^2\equiv z \pmod{p}$, then there exists $x_k$ such that $\left(x_k\right)^2\equiv z \pmod{p^k}$ for every integer $k\ge 2$.
\end{proposition}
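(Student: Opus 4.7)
My plan is to prove this by induction on $k \geq 2$, using the classical Hensel lifting construction. The base case $k=1$ is exactly the hypothesis. For the inductive step, suppose $x_{k-1}$ satisfies $x_{k-1}^2 \equiv z \pmod{p^{k-1}}$ (with $x_1 = x$), and write $z - x_{k-1}^2 = s\,p^{k-1}$ for some integer $s$. I would look for a lift of the form $x_k = x_{k-1} + t\,p^{k-1}$, since
\[
x_k^2 \;=\; x_{k-1}^2 + 2 x_{k-1} t\, p^{k-1} + t^2 p^{2(k-1)},
\]
and the condition $k \geq 2$ ensures $2(k-1) \geq k$, so the quadratic term in $t$ vanishes modulo $p^k$. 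The congruence $x_k^2 \equiv z \pmod{p^k}$ then reduces, after dividing out $p^{k-1}$, to the linear congruence $2 x_{k-1} t \equiv s \pmod{p}$.

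For odd $p$ with $p \nmid z$, the initial hypothesis forces $p \nmid x$, and inductively $p \nmid x_{k-1}$, so $\gcd(2 x_{k-1}, p) = 1$ and the linear congruence has a unique solution for $t$ modulo $p$. The induction therefore closes cleanly and delivers the desired $x_k$ at every stage. This is really the entire content of the proof in the generic case, and it is essentially the template used to prove Hensel's lemma in the reference \cite{N}.

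The main obstacle lies in the edge cases. When $p \mid z$ the coefficient $2 x_{k-1}$ may cease to be a unit, so one should first factor $z = p^{2m} z'$ with either $z' = 0$ (trivial) or $p \nmid z'$, lift a square root of $z'$ by the argument above, and then multiply by $p^m$. The prime $p = 2$ is subtler still, since $2 x_{k-1}$ is never invertible modulo $2$; here the usual Hensel lemma for squares actually requires the stronger hypothesis $z \equiv 1 \pmod{8}$ for odd $z$ (rather than merely $z \equiv 1 \pmod{2}$), a wrinkle quietly absorbed by the standard formulation of the result in \cite{N}. In the applications of this proposition later in the paper, the moduli will be split into prime powers via Proposition \ref{Prop:CRT}, and these borderline cases at $p = 2$ can be dispatched by direct computation once the reduction has taken place.
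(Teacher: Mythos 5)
The paper states this proposition without proof (it is listed in Section \ref{Prelim} as a direct consequence of Hensel's lemma, with a citation to \cite{N}), so there is no internal proof to compare against; your argument is the standard Hensel induction, and it is correct in the main case, namely $p$ odd and $p\nmid z$. There the hypothesis forces $p\nmid x$, the lift $x_k=x_{k-1}+tp^{k-1}$ with $2x_{k-1}t\equiv s\pmod{p}$ closes the induction because $2(k-1)\ge k$ for $k\ge 2$, and this covers every invocation of the proposition in the paper: it is only ever applied with an odd prime $p$ and a value of $z$ that is a unit modulo $p$ (namely $-24$ and $z-1\equiv 4\pmod 5$ in the proof of Theorem \ref{Thm:NoZero}, and $-za$, $-z(-a-1)$ with Legendre symbol $1$ in the proof of Theorem \ref{Thm:Zero}), never with $p=2$. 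Your observation that the literal statement fails at $p=2$ (e.g.\ $z=3$: $1^2\equiv 3\pmod 2$, yet $x^2\equiv 3\pmod 4$ is insoluble) is accurate.

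The one genuine gap is your treatment of $p\mid z$. You propose to factor $z=p^{2m}z'$ with $z'=0$ or $p\nmid z'$, but such a factorization exists only when the exact power of $p$ dividing $z$ is even; when that exponent is odd your patch silently assumes the case away, and in fact the proposition is then false. Take $p=3$, $z=3$: the hypothesis holds via $0^2\equiv 3\pmod 3$, yet $x^2\equiv 3\pmod 9$ has no solution, since the squares modulo $9$ are $0,1,4,7$. So this edge case is, like $p=2$, a second counterexample to the statement as written rather than something your construction repairs. The honest fix is to record it as such and to state explicitly that you prove the lifting claim under the hypotheses in which it is actually used in the paper --- $p$ odd and $p\nmid z$ (or, more generally, $z$ divisible by an even power of $p$) --- where your induction is complete and correct.
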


 Throughout this article, we let $\left(\frac{x}{p}\right)$ denote the Legendre symbol, where $p$ is a prime and $x\in \Z$.
  Given an integer $n\ge2$, we let
  \[\S_n=\left\{x^2 \pmod{n} \left\vert\vphantom{\frac{1}{1}}\right. x^2 \not \equiv 0 \pmod{n} \right\},\]
   and \[\S_n^{0}:=\S_n \cup \{0 \pmod{n}\}.\] Then, for a given $z\in \Z_n$, a pair $(x^2,y^2)$ such that
  \begin{equation}\label{Eq:zmodn}
     x^2+y^2\equiv z \pmod{n}
     \end{equation}
   where both $x^2 \pmod{n}$ and $y^2\pmod{n}$ are elements of $\S_n$, is called a \emph{nontrivial solution} to \eqref{Eq:zmodn}. A solution $(x^2,y^2)$ to \eqref{Eq:zmodn}, where either $x^2\equiv 0 \pmod{n}$ or $y^2\equiv 0 \pmod{n}$, is called a \emph{trivial solution}.

  \section{Not Allowing Zero as a Summand}\label{Section:NoZero}
In this section we prove the main result in this article, but first we prove a lemma.
\begin{lemma}\label{Lem:SolvableCong}
 Let $z$ and $a\ge 1$ be integers. Let $p\equiv 1 \mod{4}$ and $q\equiv 3\pmod{4}$ be primes. Then each of the congruences
\begin{align}
  x^2+y^2&\equiv z \pmod{2}\label{C1}\\
  x^2+y^2&\equiv z \pmod{p^{a}}\label{C2}\\
  x^2+y^2&\equiv z \pmod{q}.\label{C3}
\end{align} has a solution. Moreover, with the single exception of $z\equiv  0\pmod{q}$, we can choose a solution where either $x^2 \not\equiv  0\pmod{m}$ or $y^2\not \equiv 0 \pmod{m}$ with $m\in \{2,p^a,q\}$.
\end{lemma}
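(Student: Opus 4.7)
The plan is to treat the three congruences separately, using a short pigeonhole argument to produce a solution modulo each prime and Proposition \ref{Prop:HL} (Hensel's lemma) to pass from $p$ to $p^a$.

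For \eqref{C1}, the cases are immediate: when $z$ is even the pair $(1,1)$ works with both squares nonzero, and when $z$ is odd the pair $(1,0)$ works with $x^2 \equiv 1 \not\equiv 0 \pmod{2}$. For \eqref{C2} and \eqref{C3}, let $r \in \{p,q\}$ and consider the subsets $A = \{x^2 \pmod{r} : x \in \Z\}$ and $B = \{z - y^2 \pmod{r} : y \in \Z\}$ of $\Z_r$, each of cardinality $(r+1)/2$. Since $|A| + |B| = r+1 > r$, the two sets must intersect, producing integers $x_0, y_0$ with $x_0^2 + y_0^2 \equiv z \pmod{r}$. This already settles solvability for \eqref{C3}.

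To finish \eqref{C2} I would first arrange a mod $p$ solution with $x_0 \not\equiv 0 \pmod{p}$. If $z \not\equiv 0 \pmod{p}$, any solution already has one coordinate nonzero mod $p$; if $z \equiv 0 \pmod{p}$, the hypothesis $p \equiv 1 \pmod{4}$ gives $\left(\tfrac{-1}{p}\right) = 1$, so one can take $x_0^2 \equiv -1 \pmod{p}$ paired with $y_0 = 1$. Applying Proposition \ref{Prop:HL} to the single-variable congruence $X^2 \equiv z - y_0^2 \pmod{p^a}$, which is solvable mod $p$ at $X = x_0$, produces a lifted root $x$ satisfying $x^2 + y_0^2 \equiv z \pmod{p^a}$; since $x^2 \equiv x_0^2 \not\equiv 0 \pmod{p}$, in particular $x^2 \not\equiv 0 \pmod{p^a}$, which gives the required nontriviality.

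For the ``moreover'' assertion in \eqref{C3}, observe that if $z \not\equiv 0 \pmod{q}$, no solution can have both coordinates $\equiv 0 \pmod{q}$, so at least one of $x_0^2, y_0^2$ is nonzero mod $q$. If instead $z \equiv 0 \pmod{q}$, then any solution satisfies $x_0^2 \equiv -y_0^2 \pmod{q}$; since $q \equiv 3 \pmod{4}$ makes $-1$ a nonresidue mod $q$, a nonzero $y_0$ would yield $(x_0 y_0^{-1})^2 \equiv -1 \pmod{q}$, which is impossible, so $y_0 \equiv 0 \pmod{q}$ and consequently $x_0 \equiv 0 \pmod{q}$. Thus every solution is trivial in this case, which is precisely the exception recorded in the lemma. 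The only mildly subtle point is ensuring $x_0 \not\equiv 0 \pmod{p}$ before invoking Hensel, which is exactly why the $z \equiv 0 \pmod{p}$ branch of \eqref{C2} must be handled separately using $p \equiv 1 \pmod{4}$.
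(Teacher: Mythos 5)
Your proof is correct, but it takes a genuinely different route from the paper's. The paper's argument is global: for \eqref{C2} with $z\not\equiv 0\pmod{p^a}$ and for \eqref{C3} it invokes Dirichlet's theorem on primes in arithmetic progressions to manufacture an auxiliary prime $r\equiv 1\pmod 4$ lying in the right residue class, writes $r$ (or $p^b r$) as a sum of two nonzero integer squares via Theorem \ref{Thm:Nonzero3} and Proposition \ref{Prop:EulerExt}, and then reduces modulo the relevant modulus; the case $z\equiv 0\pmod{p^a}$ is handled by the explicit representation $p^a=x^2+y^2$. Your argument is purely local: the pigeonhole count $|A|+|B|=r+1>r$ settles solvability modulo any odd prime without any appeal to Dirichlet, and Hensel lifting handles the passage to $p^a$. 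What the paper's route buys is that the lemma falls out of the two-squares machinery already built in Section 1, and the integer representations it produces have both summands visibly nonzero; the cost is reliance on Dirichlet's theorem, a far deeper tool than anything else in the proof. Your route is more elementary and self-contained, and it correctly isolates the one delicate point: Proposition \ref{Prop:HL} as stated in the paper omits the simple-root hypothesis (it fails, e.g., for $X^2\equiv p\pmod{p^2}$), so your explicit arrangement that $z-y_0^2\not\equiv 0\pmod p$ before lifting --- using $\left(\frac{-1}{p}\right)=1$ in the branch $z\equiv 0\pmod p$ --- is exactly the care needed to make the Hensel step legitimate. Your treatment of the exceptional case $z\equiv 0\pmod q$ in \eqref{C3}, showing every solution there is forced to be trivial, also matches and slightly sharpens what the lemma asserts.
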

\begin{proof}
   Clearly, \eqref{C1} always has a solution with $x^2\equiv 1 \pmod{2}$. We show now that \eqref{C2} always has a solution with $y^2\not \equiv 0\pmod{p^a}$. Suppose first that $z\equiv 0 \pmod{p^a}$. By Proposition \ref{Prop:EulerExt}, there exist positive integers $x^2$ and $y^2$ such that $x^2+y^2=p^a$. Then, since neither $x^2$ nor $y^2$ is divisible by $p^a$, we have a desired solution to \eqref{C2}. Now suppose that $z\not \equiv 0\pmod{p^a}$. Let $\gcd(z,p^a)=p^b$ with $b<a$, and write $z=z^{\prime}p^b$. Consider the arithmetic progression
\[\A_k:=4p^{a-b}k + p^{a-b}(1-z^{\prime})+z^{\prime}.\]
Note that for any integer $k$, we have that  $\A_k\equiv z^{\prime} \pmod{p^a}$ and $\A_k\equiv 1 \pmod{4}$. Then, since $\gcd\left(4p^{a-b},p^{a-b}(1-z^{\prime})+z^{\prime}\right)=1$, it follows from Dirichlet's theorem on primes in an arithmetic progression that $\A_k$ contains infinitely many primes $r\equiv 1 \pmod{4}$. For such a prime $r$, Theorem \ref{Thm:Nonzero3} tells us that there exist nonzero integers $x^2$ and $y^2$ such that $x^2+y^2=p^br$. Observe that $x^2$ and $y^2$ cannot both be divisible by $p^a$. Hence, since $p^br\equiv z \pmod{p^a}$, we have a solution to \eqref{C2}, where, after relabeling if necessary, $y^2\not \equiv 0\pmod{p^a}$.

We show next that $\eqref{C3}$ always has a solution. If $z\equiv 0 \pmod{q}$, then we can take $x^2\equiv y^2\equiv 0 \pmod{q}$. If $z\not \equiv  0\pmod{q}$, then we consider the arithmetic progression
\[\B_k:=4qk+q(3+z)+z.\] Note here that  $\B_k\equiv z \pmod{q}$ and $\B_k\equiv 1 \pmod{4}$ for any integer $k$. As before, since $\gcd\left(4q,q(3+z)+z\right)=1$, it follows from Dirichlet's theorem that $\B_k$ contains infinitely many primes $r\equiv 1 \pmod{4}$. Thus, by Proposition \ref{Prop:EulerExt}, there exist nonzero integers $x^2$ and $y^2$ such that $x^2+y^2=r$ for such a prime $r$. Clearly, not both $x^2$ and $y^2$ are divisible by $q$. Hence, with the exception of $z\equiv 0 \pmod{q}$, we have a solution to \eqref{C3} where we can choose $y^2\not \equiv  0\pmod{q}$.
\end{proof}
 \begin{theorem}\label{Thm:NoZero}
 Let $n\ge 2$ be an integer. Then, for every $z\in \Z_n$, \eqref{Eq:zmodn} has a nontrivial solution if and only if
  \begin{enumerate}[font=\normalfont]
    \item $n \not \equiv 0 \pmod{q^2}$ for any prime $q\equiv 3 \pmod{4}$ with $n\equiv 0 \pmod{q}$\label{Thm:Item1}
    \item $n \not \equiv 0 \pmod{4}$ \label{Thm:Item2}
    \item $n\equiv 0 \pmod{p}$ for some prime $p\equiv 1 \pmod{4}$ \label{Thm:Item3}
    \item Also, when $n\equiv 1 \pmod{2}$, we have the following additional conditions. Write $n=5^km$, where $m\not \equiv 0 \pmod{5}$. Then, either \label{Thm:Item4}
          \begin{enumerate}[font=\normalfont]
      \item $k\ge 3$, with no further restrictions on $m$, or \label{Thm:Item5}
      \item $k<3$ and $m\equiv 0 \pmod{p}$ for some prime $p\equiv 1 \pmod{4}$.\label{Thm:Item6}
      \end{enumerate}
      \end{enumerate}
\end{theorem}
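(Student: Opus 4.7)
The plan is to work both directions via the Chinese Remainder Theorem (Proposition~\ref{Prop:CRT}), passing between $\Z_n$ and its prime-power components. Lemma~\ref{Lem:SolvableCong} supplies the local building blocks; the delicate point is coordinating them so that the resulting pair $(x^2, y^2)$ is globally nontrivial.

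For necessity, I would exhibit obstructing values of $z$ for each failing condition. If $q^2 \mid n$ with $q \equiv 3 \pmod{4}$, then $z = q$ forces $q \mid x$ and $q \mid y$ (since $-1$ is a non-residue modulo $q$), so $q^2 \mid x^2 + y^2$, contradicting $z \equiv q \pmod{q^2}$. If $4 \mid n$, squares modulo $4$ lie in $\{0, 1\}$ and $z = 3$ admits no representation at all. If no prime $p \equiv 1 \pmod{4}$ divides $n$, then (1)--(2) force $n = 2^{\epsilon}\prod q_i$ with $\epsilon \in \{0, 1\}$ and distinct $q_i \equiv 3 \pmod 4$; a direct divisibility analysis at $z = q_1$ (or $z = 1$ when $n = 2$) then collapses every representation to a trivial one. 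For condition~(4), the crux is the finite check that in $\Z_5$ and $\Z_{25}$ the residues $z \equiv 1, 4$ do not admit nontrivial representations. Given $n = 5^k m$ odd with $k \in \{1, 2\}$ and $m$ having no prime factor $\equiv 1 \pmod{4}$, I would choose $z$ with $z \equiv 1 \pmod{5^k}$ and $z \equiv 0 \pmod{q_i}$ for each $q_i \mid m$; each $q_i$ forces $q_i \mid x$ and $q_i \mid y$, and after stripping these factors the residual congruence $a^2 + b^2 \equiv (\text{unit}) \pmod{5^k}$ forces one of $a, b$ to vanish modulo $5^k$, making the assembled global solution trivial.

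For sufficiency, assume (1)--(4) and factor $n = 2^{\epsilon} \prod_j p_j^{a_j} \prod_i q_i$ with $p_j \equiv 1 \pmod{4}$, $q_i \equiv 3 \pmod{4}$, $\epsilon \in \{0, 1\}$, and each $q_i$ appearing with exponent $1$. Apply Lemma~\ref{Lem:SolvableCong} to obtain local solutions at each prime power and then combine them by Proposition~\ref{Prop:CRT}. The main obstacle is arranging $x^2 \not\equiv 0 \pmod{n}$ \emph{and} $y^2 \not\equiv 0 \pmod{n}$ simultaneously, since Lemma~\ref{Lem:SolvableCong} guarantees only that at least one summand is nonzero at each prime power. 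I would secure one summand, say $y^2$, as globally nonzero by choosing it nonzero at the prime $p$ supplied by condition~(3), and then make $x^2$ globally nonzero by invoking a second prime: when $5^k$ exactly divides $n$ with $k \geq 3$, one uses Proposition~\ref{Prop:HL} to produce enough square roots modulo $5^k$ to arrange nontriviality there; when $k < 3$ one falls back on the auxiliary prime $p' \equiv 1 \pmod{4}$ guaranteed by (4b); and when $n$ is even the factor of $2$ provides the needed room through Lemma~\ref{Lem:SolvableCong}~\eqref{C1}. The most delicate subcase is when $z \equiv 0 \pmod{q_i}$ for one or more $q_i$, forcing $q_i \mid x$ and $q_i \mid y$ at every such $q_i$; I expect verifying that conditions~(1)--(4) exactly capture when the non-vanishing of $x^2, y^2$ modulo $n$ can still be arranged in this case to be the technical heart of the argument, and mirrors the case analysis used in the necessity proof of (4).
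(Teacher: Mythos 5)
Your overall architecture---local solutions from Lemma~\ref{Lem:SolvableCong} glued together by Proposition~\ref{Prop:CRT}, with nontriviality anchored at two different prime powers---is the same as the paper's, and your treatment of conditions \eqref{Thm:Item1}, \eqref{Thm:Item2} and \eqref{Thm:Item4}, as well as your sufficiency outline, matches the paper's proof in all essentials. However, there is a genuine gap in your necessity argument for condition~\eqref{Thm:Item3}. You propose $z=q_1$ as the obstructing residue when $n=2^{\epsilon}\prod q_i$ has no prime factor $\equiv 1\pmod{4}$, claiming that a divisibility analysis collapses every representation to a trivial one. That is false: for $n=21$ and $z=3=q_1$ we have $3^2+6^2=45\equiv 3\pmod{21}$ with $9,15\in\S_{21}$, a nontrivial solution (and likewise $3^2+6^2\equiv 3\pmod{42}$). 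The trouble is that $z=q_1$ only forces $q_1\mid x$ and $q_1\mid y$, which is far from forcing $x^2\equiv 0\pmod{n}$ or $y^2\equiv 0\pmod{n}$ when $n$ has other prime factors. The paper instead takes $z=0$ when $n$ is odd, so that \emph{every} prime factor $q_i$ forces $q_i\mid x$ and $q_i\mid y$, whence $n\mid x$ and $n\mid y$; and $z=n/2$ when $n$ is even, so that all odd prime factors force divisibility and the parity constraint $x^2+y^2\equiv 1\pmod{2}$ then pushes one of $x,y$ to $0$ modulo all of $n$. In short, you need an obstructing $z$ that is $\equiv 0$ modulo every odd prime dividing $n$ simultaneously, not just modulo one of them.

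A smaller remark on the converse direction: you flag the subcase $z\equiv 0\pmod{q_i}$ as the technical heart, but it is actually harmless---Lemma~\ref{Lem:SolvableCong} simply permits the trivial local solution $x\equiv y\equiv 0\pmod{q_i}$ at those primes, and global nontriviality is secured entirely at $2$, at $5^k$, and at the primes $p\equiv 1\pmod{4}$, exactly as you arrange elsewhere in your sketch and as the paper does. The genuinely delicate point is the case $n=5^k$ with $k\ge 3$ and no auxiliary prime, where one must produce, for every $z$, a solution modulo $5^k$ with \emph{both} summands nonzero; the paper does this by lifting $x^2\equiv -24\pmod{5}$ via Hensel to obtain $x^2+5^2\equiv 1\pmod{5^k}$ and then scaling by unit values of $z$ (with a separate Hensel lift for $z\equiv 0\pmod 5$), which is the concrete content behind your phrase ``enough square roots modulo $5^k$.''
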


\begin{proof}
  Suppose first that, for every $z\in \Z_n$, \eqref{Eq:zmodn} has a nontrivial solution. Let $q$ be a prime divisor of $n$. Then there exist $a^2,b^2,c^2,d^2,e^2,f^2\in \S_n$ such that
  \begin{align}
   a^2+b^2&\equiv q \pmod{n}, \label{Item1}\\
   c^2+d^2&\equiv -1 \pmod{n}\quad \mbox{and} \label{Item2}\\
   e^2+f^2&\equiv 0 \pmod{n} \label{Item3}.
  \end{align}

  Suppose that $q\equiv 3 \pmod{4}$ is a prime such that $n\equiv 0 \pmod{q^2}$. Then we have from \eqref{Item1} that
  \begin{equation}\label{Eq1}
  a^2 + b^2 = kq^2 +q=q(kq+1),
  \end{equation}
  for some nonzero $k\in \Z$. However, \eqref{Eq1} contradicts Theorem \ref{Thm:Sumof2squares}, since clearly $q$ divides $q(kq+1)$ to an odd power. This proves that \eqref{Thm:Item1} holds.

   If $n\equiv 0 \pmod{4}$, then we have from \eqref{Item2} that $c^2+d^2\equiv 3 \pmod{4}$, which is impossible since the set of all squares modulo 4 is $\{0,1\}$. Hence, \eqref{Thm:Item2} holds.

   We see from \eqref{Item3} that $e^2\equiv -f^2 \pmod{q}$ for every prime $q\equiv 3 \pmod{4}$ with $n\equiv 0 \pmod{q}$. Since $\left(\frac{-1}{q}\right)=-1$  for primes $q\equiv 3 \pmod{4}$, we deduce that $e\equiv f \equiv 0\pmod{q}$. Hence, if $n\equiv 1 \pmod{2}$ and $n$ is divisible by no prime $p\equiv 1 \pmod{4}$, it follows from \eqref{Thm:Item1} that $e\equiv f\equiv 0 \pmod{n}$, which contradicts the fact that $e^2,f^2\in \S_n$. From \eqref{Thm:Item2}, if $n\equiv 0 \pmod{2}$, then we can write $n=2m$, where $m \equiv 1 \pmod{2}$. By hypothesis, there exist $s^2,t^2\in \S_n$ such that $s^2+t^2\equiv m \pmod{n}$. If $m$ is divisible by no prime $p\equiv 1 \pmod{4}$, then as before, since $\left(\frac{-1}{q}\right)=-1$ for primes $q\equiv 3 \pmod{4}$, we conclude that $s\equiv t \equiv 0\pmod{m}$. But $s^2+t^2\equiv 1 \pmod{2}$ which implies, without loss of generality, that $s\equiv 0 \pmod{2}$. Therefore, $s\equiv 0\pmod{n}$, which contradicts the fact that $s^2\in \S_n$. Thus, \eqref{Thm:Item3} holds.

    Assume now that $n\equiv 1 \pmod{2}$, and write $n=5^km$, where $m\not \equiv 0 \pmod{5}$. Consider first the possibility that $k=1$ and no prime $p\equiv 1 \pmod{4}$ divides $m$. To rule this case out, we assume first that $\left(\frac{m}{5}\right)=1$. By hypothesis, there exist $s^2,t^2\in \S_n$ such that $s^2+t^2\equiv m \pmod{n}$. If $m=1$, then $n=5$ and this is impossible since the set of nonzero squares modulo 5 is $\{1,4\}$. If $m>1$ then every prime divisor $q$ of $m$ is such that $q\equiv 3 \pmod{4}$. So, we must have, as before, that $s\equiv t \equiv 0 \pmod{m}$. Therefore, since $s^2,t^2\in \S_n$, we deduce that $s^2\not \equiv 0 \pmod{5}$ and $t^2\not \equiv  0\pmod{5}$. Since $\left(\frac{m}{5}\right)=1$, it follows modulo 5 that $s^2, t^2, m\in \{1,4\}$. But then again, $s^2+t^2\equiv m \pmod{5}$ is impossible. If $\left(\frac{m}{5}\right)=-1$, then the proof is identical, except that the representation $s^2+t^2\equiv 2m \pmod{n}$ is impossible since modulo 5 we have $m\in \{2,3\}$, which implies that $s^2+t^2\equiv 2m \pmod{5}$ is impossible.

    The possibility that $k=2$ and no prime $p\equiv 1 \pmod{4}$ divides $m$ can be ruled out in a similar manner by using the fact that the nonzero squares modulo 25 are \{1, 4, 6, 9, 11, 14, 16, 19, 21, 24\}, and reducing the situation to an examination of the representations
    \[s^2+t^2\equiv \left\{\begin{array}{cl}
      1 \pmod{25} & \mbox{if $m=1$}\\
      m \pmod{25} & \mbox{if $m>1$ and $\left(\frac{m}{5}\right)=1$}\\
      2m \pmod{25} & \mbox{if $m>1$ and $\left(\frac{m}{5}\right)=-1$}.
    \end{array}\right.\]
    This completes the proof of the theorem in this direction.

    Now suppose that conditions \eqref{Thm:Item1}, \eqref{Thm:Item2}, \eqref{Thm:Item3} and \eqref{Thm:Item4} hold, and let $z$ be a nonnegative integer. Our strategy here is to use Lemma \ref{Lem:SolvableCong} and Proposition \ref{Prop:CRT} to piece together the solutions for each distinct prime power dividing $n$ to get a nontrivial solution to \eqref{Eq:zmodn}.

   We consider two cases: $n\equiv 0\pmod{2}$ and $n\equiv 1  \pmod{2}$. If $n\equiv 0\pmod{2}$, then we can write
\[n=2\left(\prod_{i=1}^{s} p_i^{a_i}\right)\prod_{i=1}^t q_i,\]
where $s\ge 1$, $t\ge 0$, $p_i\equiv 1 \pmod{4}$ and $q_i\equiv 3 \pmod{4}$. Note that $t=0$ is a possibility, and in this case, we define the empty product $\prod_{i=1}^t q_i$ to be 1. Since $s\ge 1$, we have from Lemma \ref{Lem:SolvableCong} that there exist solutions to \eqref{C1} and \eqref{C2} where respectively $x^2\not \equiv 0\pmod{2}$ and $y^2\not \equiv  0\pmod{p_i^{a_i}}$. Then, using Proposition \ref{Prop:CRT} to piece together the solutions for $x^2$ and $y^2$ modulo each modulus in $\left\{2,p_1^{a_1},\ldots, p_t^{a_t}\right\}$, we get a nontrivial solution to \eqref{Eq:zmodn}.

  We now turn our attention to the case $n\equiv 1 \pmod{2}$, and write
  \[n=5^k\left(\mathop{\prod_{i=1}^s}_{p_i\ne 5} p_i^{a_i}\right)\prod_{i=1}^t q_i.\]
  where $p_i\equiv 1 \pmod{4}$ and $q_i\equiv 3 \pmod{4}$ are primes. Suppose first that $k\le 2$. Then, it is easy to check that the only solutions to
  \[x^2+y^2\equiv 1 \pmod{5^k}\] have either $x^2\equiv 0 \pmod{5^k}$ or $y^2\equiv 0 \pmod{5^k}$. However, we can always choose a solution with $x^2\not \equiv 0 \pmod{5^k}$. Since $k\le 2$, we have that $s\ge 1$ so that there exists a prime $p\equiv 1 \pmod{4}$ that divides $n$, with $p\ne 5$. Hence, by Lemma \ref{Lem:SolvableCong}, \eqref{C2} always has a solution where $y^2\not \equiv 0\pmod{p^a}$. This allows us again to use Proposition \ref{Prop:CRT} to get a nontrivial solution to \eqref{Eq:zmodn}.

  Suppose next that $k\ge 3$. If $s\ne 0$, then as before, we can invoke Lemma \ref{Lem:SolvableCong} and Proposition \ref{Prop:CRT} to achieve a nontrivial solution to \eqref{Eq:zmodn}. So, assume that $s=0$. We show that the congruence
  \begin{equation}\label{Eq:5^k}
    x^2+y^2\equiv z \pmod{5^k},
  \end{equation}
  always has a solution where $x^2\not \equiv 0\pmod{5^k}$ and $y^2\not \equiv 0 \pmod{5^k}$. Since $x^2+y^2=5^k$ has a solution (by Theorem \ref{Thm:Nonzero3}) with neither $x^2$ nor $y^2$ divisible by $5^k$, it follows that \eqref{Eq:5^k} has a nontrivial solution when $z\equiv  0\pmod{5^k}$. Now suppose that $z\not \equiv 0 \pmod{5^k}$. We know from Lemma \ref{Lem:SolvableCong} that \eqref{Eq:5^k} has a solution with $y^2\not \equiv 0 \pmod{5^k}$. If $z\not \in \S_{5^k}$, then it must be that $x^2\not \equiv 0 \pmod{5^k}$ as well, which gives us a nontrivial solution. So, let $z\in \S_{5^k}$. Since $-24\equiv 1\in \S_5$, it follows from Proposition \ref{Prop:HL} that, for any integer $k\ge 2$, there exists $x$ such that
  \begin{equation}\label{Eq:f}
  x^2\equiv -24 \pmod{5^k},
  \end{equation} with $x^2\not \equiv 0 \pmod{5^k}$. We can rewrite \eqref{Eq:f} as
  \begin{equation}\label{Eq:z=1}
  x^2+5^2\equiv 1 \pmod{5^k},
  \end{equation} which implies that \eqref{C2} has a nontrivial solution when $z\equiv 1 \pmod{5^k}$--provided that $k\ge 3$, which we have assumed here. Also, note that this nontrivial solution to \eqref{Eq:z=1} has $x^2\not \equiv 0 \pmod{5}$. Hence, for any $z\in \S_{5^k}$ with $z\not \equiv 0 \pmod{5}$, we see that multiplying \eqref{Eq:z=1} by $z$ yields a nontrivial solution to \eqref{C2} for these particular values of $z$. Now suppose that $z\in \S_{5^k}$ with $z\equiv  0\pmod{5}$. Then $z-1\equiv 4 \pmod{5}$ and, by Proposition \ref{Prop:HL}, we have, for any integer $k\ge 2$, that there exists $x\not \equiv 0 \pmod{5^k}$ such that $x^2\equiv z-1 \pmod{5^k}$. That is,
  \[x^2+1\equiv z \pmod{5^k},\] and hence we have a nontrivial solution to \eqref{Eq:zmodn} in this last case, which completes the proof of the theorem.
 \end{proof}

 The first 25 values of $n$ satisfying the conditions of Theorem \ref{Thm:NoZero} are
\[
 10, 13, 17, 26, 29, 30, 34, 37, 39, 41, 50, 51, 53, 58, 61, 65, 70, 73, 74, 78, 82, 85, 87, 89, 91.
\]

\section{Allowing Zero as a Summand}\label{Section:AllowingZero}
For the sake of completeness, we address now the situation when trivial solutions are allowed in \eqref{Eq:zmodn}.
The main theorem of this section gives a precise description of the integers $n$ such that, for any $z\in \Z_n$, \eqref{Eq:zmodn} has a solution $(x,y)$ with $x,y\in \S_n^{0}$.
 Certainly, the proof of this result builds off of Theorem \ref{Thm:NoZero} since every value of $n$ for which there exists a nontrivial solution to \eqref{Eq:zmodn} will be included here as well. From an analysis of the proof of Theorem \ref{Thm:NoZero}, it is straightforward to see that allowing $0$ as a summand does not buy us any new values of $n$ here under the restrictions found in \eqref{Thm:Item1} and \eqref{Thm:Item2}. However, it turns out that the restrictions in \eqref{Thm:Item3} and \eqref{Thm:Item4} of Theorem \ref{Thm:NoZero} are not required. More precisely, we have:
\begin{theorem}\label{Thm:Zero}
 Let $n\ge 2$ be an integer. Then, for every $z\in \Z_n$, \eqref{Eq:zmodn} has a solution $(x,y)$ with $x,y\in \S_n^{0}$ if and only if the following conditions hold:
  \begin{enumerate}[font=\normalfont]
    \item $n \not \equiv 0 \pmod{q^2}$ for any prime $q\equiv 3 \pmod{4}$ with $n\equiv 0 \pmod{q}$ \label{Thm:Item1A}
    \item $n \not \equiv 0 \pmod{4}$. \label{Thm:Item2A}
      \end{enumerate}
\end{theorem}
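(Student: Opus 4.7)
The plan is to mirror the proof of Theorem \ref{Thm:NoZero}, observing that conditions (1) and (2) there were forced by obstructions that do not depend on the nontriviality of the representation, whereas conditions (3) and (4) arose \emph{only} to rule out trivial solutions; once zero summands are permitted, (3) and (4) should simply drop out, leaving (1) and (2) as the complete characterization.

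For the necessity, I would adapt the forward direction of Theorem \ref{Thm:NoZero}. Condition (1) follows from essentially the same $q$-adic valuation argument: if $q \equiv 3 \pmod{4}$ and $q^2 \mid n$, then any integer equation $x^2 + y^2 = q + kn$ (even with $x = 0$ or $y = 0$) has $k \geq 0$ since $q < n$, and writing $q + kn = q\bigl(1 + k(n/q)\bigr)$ with $q \mid (n/q)$ shows that $q$ appears to multiplicity exactly $1$ on the right-hand side, contradicting Theorem \ref{Thm:Sumof2squares}. Condition (2) follows from the usual parity obstruction modulo $4$, which applies equally well when a summand is zero, by taking $z \equiv 3 \pmod{n}$.

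For the sufficiency, assume (1) and (2) and factor $n = 2^{\epsilon} \prod_i p_i^{a_i} \prod_j q_j$ with $\epsilon \in \{0,1\}$, each $p_i \equiv 1 \pmod{4}$, and each $q_j \equiv 3 \pmod{4}$ appearing to the first power. For any $z \in \Z_n$, Lemma \ref{Lem:SolvableCong} supplies a solution to $x^2 + y^2 \equiv z$ modulo each prime power factor, where the formerly exceptional case $z \equiv 0 \pmod{q_j}$ is now handled immediately by the pair $(0,0)$. Applying the Chinese Remainder Theorem coordinate-wise to $x$ and to $y$ then assembles a global solution modulo $n$.

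The main point of care is checking that the necessity of condition (1) survives the relaxation to trivial solutions: one must verify that the $q$-adic valuation argument still forces a contradiction when one of the summands may vanish, which it does by the computation above. Everything else is strictly lighter than in the proof of Theorem \ref{Thm:NoZero}; in particular, the delicate case analyses modulo $5$ and $5^2$ are entirely bypassed, and no appeal to Proposition \ref{Prop:HL} or Dirichlet's theorem beyond what is already packaged in Lemma \ref{Lem:SolvableCong} is needed.
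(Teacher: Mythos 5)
Your proof is correct, and the sufficiency half takes a genuinely different (and cleaner) route than the paper's. The necessity half is essentially the paper's: both obstructions---the $q$-adic valuation argument for condition \eqref{Thm:Item1A} and the squares-mod-$4$ argument for condition \eqref{Thm:Item2A}---are indifferent to whether a summand vanishes, exactly as you check. For sufficiency, the paper does not simply rerun Lemma \ref{Lem:SolvableCong} together with Proposition \ref{Prop:CRT}; it instead argues that conditions \eqref{Thm:Item3} and \eqref{Thm:Item4} of Theorem \ref{Thm:NoZero} can be discarded, which forces it to treat the case where no prime $p\equiv 1\pmod{4}$ divides $n$ by a new quadratic-residue construction: for $p\equiv 3\pmod{4}$ there is an $a$ with $\left(\frac{a}{p}\right)=1$ and $\left(\frac{a+1}{p}\right)=-1$, so $-a-1$ is also a residue and $a+(-a-1)\equiv -1\pmod{p}$ represents every nonsquare after scaling, squares being handled by a trivial pair; Proposition \ref{Prop:HL} and Proposition \ref{Prop:CRT} then finish, with separate checks modulo $5$ and $25$. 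Your route observes that Lemma \ref{Lem:SolvableCong} already provides an unconditional (possibly trivial) solution modulo $2$, modulo each $p^{a}$ with $p\equiv 1\pmod{4}$, and modulo each $q\equiv 3\pmod{4}$---the lone exceptional case $z\equiv 0\pmod{q}$ being precisely the one where the pair $(0,0)$ is now permitted---so Proposition \ref{Prop:CRT} assembles a global solution with no further construction. This is more economical: it bypasses the quadratic-residue computation, the Hensel lift, and the residue checks modulo $5$ and $25$. What the paper's argument buys in exchange is slightly more information (for a nonsquare $z$ the two summands modulo $p$ can both be taken to be nonzero residues), but that extra strength is not needed for the statement as given.
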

\begin{proof}
 We show first that condition \eqref{Thm:Item3} of Theorem \ref{Thm:NoZero} is not required here. Suppose that every prime divisor $p$ of $n$ is such that $p\equiv 3 \pmod{4}$. Certainly, if $z\in \Z_n$ is a square, then \eqref{Eq:zmodn} has a solution $(x,y)$, with $x,y\in \S_n^{0}$; namely $(z,0)$. So, we need to show that \eqref{Eq:zmodn} has a solution $(x,y)$ with $x,y\in \S_n^{0}$ for every nonsquare $z\in \Z_n$. To begin, we claim that \eqref{Eq:zmodn} has a solution modulo $p$ when $z=-1$, which is not a square modulo $p$.
For $a\in \Z_p$, if $\left(\frac{a}{p}\right)=1$ and $\left(\frac{a+1}{p}\right)=-1$, then $\left(\frac{-a-1}{p}\right)=1$. Thus,
$$a + (-a-1)\equiv -1\pmod p.$$
Such an element $a\in \Z_p$ must exist, otherwise all elements of $\Z_p$ would be squares, which is absurd. Now, any nonsquare $z\in \Z_p$ can be written as $-(-z)$, where $\left(\frac{-z}{p}\right)=1$. Therefore, $\left(\frac{-za}{p}\right)=\left(\frac{-z(-a-1)}{p}\right)=1$, and we have that
\[(-za)+(-z)(-a-1)\equiv z \pmod{p}.\] Then we can use Proposition \ref{Prop:HL} to lift this solution modulo $p$ to a solution modulo $p^a$, where $p^a$ is the exact power of $p$ that divides $n$. Finally, we use Proposition \ref{Prop:CRT} to piece together the solutions for each of these prime powers to get a solution modulo $n$.

To see that the restrictions in \eqref{Thm:Item4} are not required here, we note that the restriction that $m$ be divisible by some odd prime $p\equiv 1 \pmod{4}$ is not required by the previous argument. Therefore, to complete the proof of the theorem, it is enough to observe that every element in $\Z_5$ and $\Z_{25}$ can be written as the sum of two elements $x,y\in \S_n^{0}$.
\end{proof}

The first 25 values of $n$ satisfying the conditions of Theorem \ref{Thm:Zero} are
\[
2, 3, 5, 6, 7, 10, 11, 13, 14, 15, 17, 19, 21, 22, 23, 25, 26, 29, 30, 31, 33, 34, 35, 37, 38.
\]

\section{Future Considerations}
 Theorem \ref{Thm:NoZero} and Theorem \ref{Thm:Zero} consider the situation when the entire ring $\Z_n$ can be obtained as the sum of two squares. When this cannot be attained, how badly does it fail; and is there a measure of this failure in terms of $n$? There are certain clues to the answers to these questions in the proof of Theorem \ref{Thm:NoZero}, but we have not pursued the solution in this article.

\section{Acknowledgments}

\end{document}